\pgfplotsset{compat=1.15}
\theoremstyle{plain}
\newtheorem{thm}{Theorem}[section]
\newtheorem{theorem}[thm]{Theorem}
\newtheorem{proposition}[thm]{Proposition}
\newtheorem{conjecture}[thm]{Conjecture}
\theoremstyle{definition}
\newtheorem{definition}[thm]{Definition}
\newtheorem{remark}[thm]{Remark}
\newtheorem{thevarthm}[thm]{\varthmname}
\newenvironment{varthm*}[1]{\trivlist\item[]{\bf #1.}\it}{\endtrivlist}
\renewcommand\geq{\geqslant}
\renewcommand\leq{\leqslant}
\newcommand\be{\begin{eqnarray*}}
\newcommand\ee{\end{eqnarray*}}
\newcommand\newop[2]{\def#1{\mathop{\rm #2}\nolimits}}
\newop\log{log}
\newop\ord{ord}
\newop\Gal{Gal}
\newop\SL{SL}
\newop\Bl{Bl}
\newop\mult{mult}
\newop\mass{mass}
\newop\div{div}
\newop\codim{codim}
\newop\sing{sing}
\newop\vdim{vdim}
\newop\edim{edim}
\newop\Ass{Ass}
\newop\size{size}
\newop\reg{reg}
\newop\satdeg{satdeg}
\newop\supp{supp}
\newop\Neg{Neg}
\newop\Nef{Nef}
\newop\Nefh{Nef_H}
\newop\Eff{Eff}
\newop\Zar{Zar}
\newop\MB{MB}
\newop\MBxC{MB\mathit{(x,C)}}
\newop\NnB{NnB}
\newop\Bigg{Big}
\newop\Effbar{\overline{\Eff}}
\def\keywordname{{\bfseries Keywords}}%
\def\keywords#1{\par\addvspace\medskipamount{\rightskip=0pt plus1cm
\def\and{\ifhmode\unskip\nobreak\fi\ $\cdot$
}\noindent\keywordname\enspace\ignorespaces#1\par}}
\def\subclassname{{\bfseries Mathematics Subject Classification
(2020)}\enspace}
\def\subclass#1{\par\addvspace\medskipamount{\rightskip=0pt plus1cm
\def\and{\ifhmode\unskip\nobreak\fi\ $\cdot$
}\noindent\subclassname\ignorespaces#1\par}}
\begin{document}
\title{Freeness of arrangements of lines and one conic with ordinary quasi-homogeneous singularities}
\author{Piotr Pokora}
\date{\today}
\maketitle

\thispagestyle{empty}
\begin{abstract}
The main purpose of the present paper is to provide a partial classification, performed with respect the weak-combinatorics, of free arrangements consisting of lines and one smooth conic with quasi-homogeneous ordinary singularities.
\keywords{conic-line arrangements, quasi-homogeneous singularities, freeness}
\subclass{51B05, 51A45, 14N25, 32S25}
\end{abstract}
\section{Introduction}
The present paper is devoted to arrangements of lines and exactly one conic in the complex projective plane with quasi-homogeneous ordinary singularities. Here by an arrangement we understand a reduced and reducible plane curve while by a reduced curve we mean a curve that might be also irreducible. Our main motivation comes from a very active area of research devoted to free arrangements of rational curve arrangements, where an arrangement is {\it free} if its associated module of derivations is a free module over the coordinate ring of the plane, and the so-called Numerical Terao's Conjecture which focuses on the so-called \textbf{weak combinatorics} of a given arrangement.
\begin{definition}
Let $C = \{C_{1}, ..., C_{k}\} \subset \mathbb{P}^{2}_{\mathbb{C}}$ be a reduced curve such that each irreducible component $C_{i}$ is \textbf{smooth}. The weak combinatorics of $C$ is a vector of the form $(d_{1}, ..., d_{s}; m_{1}, ..., m_{p})$, where $d_{i}$ denotes the number of irreducible components of $C$ of degree $i$, and $m_{j}$ denotes the number of singular points of a curve $C$ of a given type $M_{j}$.
\end{definition}
In the above definition we refer to types of singularities, which for us are determined by their local normal forms. In our paper we will use Arnold's classification of local normal forms presented in \cite{arnold}.

For instance, if $\mathcal{A} \subset \mathbb{P}^{2}_{\mathbb{C}}$ is an arrangement of $d\geq 2$ lines, then the weak combinatorics of $\mathcal{A}$ is $(d, t_{2}, ..., t_{d})$, where $t_{j}$ denotes the number of $j$-fold intersection points of $\mathcal{A}$.
Having this definition in hand, we can formulate the motivating conjecture for the investigations in the present paper.
\begin{conjecture}[Numerical Terao's Conjecture]
Let $C_{1}, C_{2}$ be two reduced curves in $\mathbb{P}^{2}_{\mathbb{C}}$ such that their all irreducible components are smooth. Suppose that $C_{1}$ and $C_{2}$ have the same weak combinatorics and all singularities that our curves admit are quasi-homogeneous. Assume that $C_{1}$ is free, then $C_{2}$ has to be free.
\end{conjecture}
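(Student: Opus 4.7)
The statement is a well-known open conjecture, so a complete proof is out of reach; my plan is to outline how one would attempt to verify it within the paper's restricted setting of lines and one smooth conic with quasi-homogeneous ordinary singularities. The strategy is to reduce freeness to a numerical condition that is manifestly invariant under changes of realisation inside a fixed weak combinatorics.

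First, I would invoke the standard characterisation of freeness for reduced plane curves whose singularities are all quasi-homogeneous: such a curve $C \subset \mathbb{P}^{2}_{\mathbb{C}}$ of degree $d$ is free with exponents $(d_{1}, d_{2})$, $d_{1}\le d_{2}$ and $d_{1}+d_{2}=d-1$, precisely when the total Tjurina number equals $(d-1)^{2} - d_{1}d_{2}$ and the minimal degree of a Jacobian relation is $d_{1}$. By K.~Saito's theorem, $\tau_{p} = \mu_{p}$ at every quasi-homogeneous singular point, hence the local Tjurina numbers depend only on the analytic type of the singularity. Consequently, both $d$ and $\tau(C) = \sum_{p}\tau_{p}$ are determined by the weak combinatorics. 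If $C_{1}$ is free with exponents $(d_{1}, d_{2})$, the target $\tau$ and the expected minimal syzygy degree are thus forced, and any $C_{2}$ with the same weak combinatorics automatically satisfies $\tau(C_{2})=\tau(C_{1})$.

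The remaining issue is to exclude a positive \emph{defect} for $C_{2}$, i.e.\ to show that the minimal degree of a non-trivial Jacobian syzygy on $C_{2}$ is not strictly smaller than $d_{1}$. In the paper's restricted class I would carry out a finite case analysis: combine the identity $\tau(C) = (d-1)^{2} - d_{1}d_{2}$ with Bezout-type constraints (how the conic meets the lines, how the lines meet one another, and how ordinary quasi-homogeneous singular points distribute these incidences) to enumerate the admissible weak combinatorics. For each tuple one either exhibits an explicit free realisation or derives a numerical contradiction via the du~Plessis--Wall inequalities, in particular the sharp bound $\tau(C)\le (d-1)^{2} - \mathrm{mdr}(C)\cdot(d-1-\mathrm{mdr}(C))$ with $\mathrm{mdr}(C)\le (d-1)/2$.

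The principal obstacle is this last step: ruling out accidental low-degree Jacobian syzygies on $C_{2}$ purely from combinatorial data. This is precisely where the conjecture ceases to be algorithmic in general. However, the restriction to one conic, lines, and only ordinary quasi-homogeneous singular points drastically narrows the possible local types (essentially ordinary $k$-fold line points plus a short list of conic--line contact types), which should make the enumeration finite in each degree and tractable by hand, giving the partial classification that the abstract promises.
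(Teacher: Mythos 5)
This statement is presented in the paper as an open conjecture (the Numerical Terao's Conjecture); the paper contains no proof of it and only establishes a partial classification of free arrangements in a restricted class. You correctly recognize this, and your outlined strategy for that restricted setting --- use Saito's theorem so that $\tau(C)$ is determined by the weak combinatorics, apply the du Plessis--Wall criterion, enumerate admissible weak combinatorics via B\'ezout-type Diophantine constraints, and then exhibit explicit free realisations or rule them out --- is essentially the approach the paper takes for its classification theorem. Two small points: the paper's hypotheses restrict to \emph{ordinary} singularities of multiplicity $<5$, so there are no conic--line contact types to consider; and the ingredient your sketch glosses over is the Dimca--Sernesi bound ${\rm mdr}(f)\geq \alpha_{C}\cdot\deg(C)-2$, which via the log canonical threshold $\tfrac{1}{2}$ of these ordinary points pins ${\rm mdr}$ down to two possible values and is what makes the enumeration finite; the genuinely open part --- excluding accidental low-degree Jacobian syzygies from combinatorial data alone --- remains exactly where you locate it.
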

This conjecture is somehow a natural generalization of the classical Terao's conjecture on (central) hyperplane arrangements, where we focus on the intersection posets of arrangements as the decisive objects. It is worth recalling that the author with Alexandru Dimca showed that if $\mathcal{CL}$ is an arrangement of $k\geq 1$ smooth conics and $d\geq 1$ lines that admits nodes, tacnodes, and ordinary triple points as singularities, that the Numerical Terao's Conjecture holds for this class of curves \cite{DimcaPokora}. On the other hand, Marchesi and Vall\'es in \cite{Mar} gave a counterexample to the Numerical Terao's Conjecture in the class of line arrangements or, more precisely, in the class of \textbf{triangular line arrangements}. Very recently, the author with Luber and K\"uhne in \cite{LKP} have shown that Numerical Terao's Conjecture holds for arrangements with up to $d = 12$ lines, and there are some counterexamples to this conjecture for $d=13$ lines. Based on that result, it is somehow natural to ask whether we can find a counterexample to Numerical Terao's Conjecture in the least general setting comparing with the line arrangement scenario. More precisely, we seek for the simplest possible counterexample, if existing, in the setting of smooth rational curve arrangements with ordinary quasi-homogeneous singularities, so we want to work with conic-line arrangements admitting ordinary points of multiplicity $2,3$ and $4$. For the completeness of our presentation we recall that an ordinary singularity of multiplicity $m$ is quasi-homogeneous if $m<5$, and this can be found in \cite[Exercise 7.31]{RCS}. Note also that our assumption of working with quasi-homogeneous singularities follows from the fact that, in general, singularities of conic-line arrangements are not quasi-homogeneous, and this leads to some pathologies as indicated in \cite[Example 4.6]{Poksur}. Another motivation for our studies comes from a completely different perspective that we want to explain now. We have several interesting invariants that can be attached to the Jacobian ideal associated with a defining equation $f \in S:=\mathbb{C}[x,y,z]$ of a reduced plane curve $C \, : f=0$. One such invariant is \textbf{the minimal degree of Jacobian relations}, which is defined as the minimal degree of a non-trivial triple $(a,b,c) \in S^{3}$ satisfying the condition that
$$a\partial_{x} \, f + b \partial_{y} \, f + c \partial_{z} \, f = 0.$$ In this context, it is worth recalling a pair of non-free arrangements of $d=9$ lines constructed by Ziegler in \cite{Ziegler} with the property that the arrangements have the same intersection lattices, so they have the same weak combinatorics which is $(d,t_{2},t_{3}) = (9;18,6)$, but they have different minimal degrees of the Jacobian relations. These two line arrangements are distinguished by the property that in one case the $6$ triple points are on a smooth conic, and in the other case they are not. From our point of view, it is natural to include this ghostly existing conic passing through $6$ points and then to study the homological properties of the resulting conic-line arrangement. 
In the present paper, motivated by Ziegler's example and a recent paper by Dimca and Sticlaru \cite{DiSti}, we want to study the freeness of arrangements consisting of one smooth conic and $d\geq 3$ lines admitting ordinary quasi-homogeneous singularites, in the hope of better understanding the freeness property from the perspective of weak combinatorics and the minimal degree of Jacobian relations. This setting allows to provide a detailed partial classification result on admissible weak combinatorics of free arrangements with $3 \leq d\leq 10$ lines and one conic having $n_{2}$ nodes, $n_{3}$ ordinary triple, and $n_{4}$ ordinary quadruple points as singularities (i.e. singularities of types $A_{1}, D_{4}$, and $X_9$ according to Arnold's classification \cite{arnold}). Since we are working with only one conic, we abbreviate the representation of the vectors of weak combinatorics to the form $(d;n_{2},n_{3},n_{4})$.
\begin{theorem}[Partial Classification]
\label{class}
Let $\mathcal{CL}$ be an arrangement of $3\leq d \leq 10$ lines and one smooth conic in the complex projective plane such that it admits ordinary singularities of multiplicity $<5$. Then the following weak combinatorics can be geometrically realized over the real numbers as \textbf{free} arrangements:
\begin{multline*}
(d;n_{2}, n_{3}, n_{4}) \in \{(3;0,3,0),(3;3,0,1),(4;2,2,1),(5;2,2,2),(5;5,1,2),\\
(6;3,0,4),(6;3,4,2),(6;6,1,3), (7;5,2,4), (7;5,4,3), (7;8,1,4), (8;2,8,3), \\ (8;5,5,4),(8,1;8,2,5), (9;6,4,6), (10;8,1,9)\}.
\end{multline*}
\end{theorem}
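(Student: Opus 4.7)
The plan is to attack each of the sixteen weak combinatorics listed in the statement by (i) constructing a concrete real realization of a conic-line arrangement $\mathcal{CL}$ with the prescribed invariants $(d;n_{2},n_{3},n_{4})$, and then (ii) certifying freeness through the criterion for curves with only quasi-homogeneous singularities that identifies freeness with an explicit relation between the total Tjurina number $\tau(\mathcal{CL})$ and the minimal degree of Jacobian relations $r:=\mathrm{mdr}(f)$, where $f\in S=\mathbb{C}[x,y,z]$ is a defining equation of $\mathcal{CL}$.

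First I would cut down the list of admissible vectors by exploiting the pair-counting identity
$$\binom{d}{2}+2d \;=\; n_{2}+3n_{3}+6n_{4},$$
obtained by counting in two ways pairs of irreducible components meeting at each singular point: a line-line pair contributes $1$, each line-conic pair contributes $2$, and an ordinary $m$-fold point absorbs $\binom{m}{2}$ of those pairs. Together with the obvious bounds $n_{4}\le d$ and $n_{3}\le 2d$ (coming from the fact that the conic meets each line in at most two points), this leaves a finite list for each $d\in\{3,\dots,10\}$ on which to test realizability.

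Second, for each target vector I would produce a real model. The constructions I expect to deploy include: pencils of conics degenerating into line pairs together with a fixed smooth conic, in the style of Ziegler's configuration, which naturally concentrates triple points on a conic; tangent- and secant-line configurations to a fixed conic (such as $xy=z^{2}$ in suitable affine coordinates) with line slopes tuned to enforce prescribed collinearities; and augmentations of classical few-line arrangements (reflection arrangements, Ceva-type configurations, dual Hesse fragments) by the unique conic through an appropriate sextuple of their singular points. For $d\le 5$ the realizations can be written down by hand; for $d\ge 6$ I would organize the construction around a chosen set of ordinary quadruple points and solve for the lines through them subject to the remaining triple-point incidences.

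Third, with an explicit equation $f$ in hand I would apply the Dimca-Sticlaru freeness criterion: since every singularity of $\mathcal{CL}$ is quasi-homogeneous by the multiplicity-$<5$ hypothesis, a reduced plane curve of degree $n=d+2$ is free if and only if
$$\tau(\mathcal{CL}) \;=\; (n-1)^{2}-r(n-1-r),$$
where the total Tjurina number $\tau(\mathcal{CL})=n_{2}+4n_{3}+9n_{4}$ is read off from the local models $A_{1},D_{4},X_{9}$. The invariant $r$ is computed directly from $f$ via a Gröbner-basis calculation in a system such as \texttt{Singular}, and the equality above then either certifies freeness or rules it out for that specific realization; equivalently, one may exhibit two syzygies of the Jacobian ideal whose degrees sum to $n-1$ and check that they generate the syzygy module.

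The principal obstacle is the construction step rather than the verification. Once an arrangement is written down, freeness reduces to a finite algebraic test; but producing a bona fide real realization for each listed $(d;n_{2},n_{3},n_{4})$ requires case-by-case projective geometry and careful bookkeeping to rule out unintended collisions of singular points. For the richer entries with $d\ge 7$ this forces one to work inside a low-dimensional moduli space of admissible arrangements and to perturb generically within the constraint locus so that no extra quadruple or triple points materialise after the specialisations used to create the prescribed ones; this last transversality check is what I expect to consume most of the work.
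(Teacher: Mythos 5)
Your strategy coincides with the paper's: for each listed vector one exhibits an explicit real defining equation, uses the fact that ordinary singularities of multiplicity $<5$ are quasi-homogeneous to read off $\tau(\mathcal{CL})=n_{2}+4n_{3}+9n_{4}$, and certifies freeness through the du Plessis--Wall equality $(d+1)^{2}-r(d+1-r)=\tau(\mathcal{CL})$ with $r=\mathrm{mdr}(f)$ computed symbolically (the paper narrows the candidate list via the Dimca--Sernesi bound $\mathrm{mdr}(f)\geq\lceil (d-2)/2\rceil$ coming from the Arnold exponent $1/2$ rather than via your cruder bounds on $n_{3},n_{4}$, but that difference is immaterial for the existence claims, and your attribution of the freeness equality to Dimca--Sticlaru rather than du Plessis--Wall is only a citation slip). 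The one substantive caveat is that the whole content of this existence theorem is the sixteen explicit defining polynomials themselves, which your proposal describes how to hunt for but does not actually produce, so as written it is a correct and faithful plan rather than a finished proof.
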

Our assumption that $d\geq 3$ follows from the fact that for $d<3$ there are no free arrangements. It is worth emphasizing that our classification is explicit since we provide the defining equations.

During the preparation of the paper I was informed by Tomasz Pe\l ka that there is an interesting thesis which might be worth looking at, so this is also a good moment to notice that our arrangements turn out to have a special meaning in a completely different area. This work is the doctoral thesis of M. Neusel \cite{neusel}, where she gives a classification result on arrangements consisting of $d$ lines and exactly one conic with some prescribed singularities admitting the so-called \textbf{tree resolution}. It turns out that some of our examples are included in her \textit{Bilderbuch}, so there is another mysterious connection between the freeness property and the property of having a tree resolution for curves.

In order to decide whether a certain weak combinatorics can be realized over the real or complex numbers, one can determine (presumably effective) numerical constraints, such as Hirzebruch-type inequalities. Here we present a general tool that we can apply to reduced plane curves with ordinary double, ordinary triple and ordinary quadruple points.
\begin{theorem}
\label{Hir}
Let $C \subset \mathbb{P}^{2}_{\mathbb{C}}$ be a reduced plane curve of degree $m\geq 6$ admitting only $n_{2}$ ordinary double, $n_{3}$ ordinary triple and $n_{4}$ ordinary quadruple points. Then one has
\begin{equation*}
9n_{2} + \frac{117}{4}n_{3} + 60n_{4} \leq 5m^{2}-6m.
\end{equation*}
\end{theorem}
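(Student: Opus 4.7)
The plan is to derive this Hirzebruch-type bound from Langer's orbifold Bogomolov--Miyaoka--Yau inequality applied to a log resolution of the pair $(\mathbb{P}^2, C)$. Inequalities of exactly this shape typically emerge from that machinery, and the three coefficients $9$, $\tfrac{117}{4}$, and $60$ should appear once the orbifold weights at nodes, triple points, and quadruple points are tuned optimally.

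First I would construct $\pi \colon Y \to \mathbb{P}^2$ by blowing up each singular point of $C$ exactly once. Because every singularity is ordinary of multiplicity $k \in \{2,3,4\}$, a single blow-up at a point $p$ already separates the branches: the exceptional curve $E_p$ is a $(-1)$-curve meeting the strict transform $\tilde C$ transversally in $k_p$ distinct points, so $D = \tilde C + \sum_p E_p$ is a simple normal crossing divisor on $Y$. Next I would endow $(Y,D)$ with an orbifold structure by assigning a weight $\alpha \in [0,1]$ to $\tilde C$ and weights $\beta_2,\beta_3,\beta_4 \in [0,1]$ to the exceptional curves according to the multiplicity of the centre. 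Using the standard identities $K_Y = \pi^*K_{\mathbb{P}^2} + \sum_p E_p$ and $\tilde C = \pi^*C - \sum_p k_p\,E_p$ together with $E_p^2 = -1$ and $E_p \cdot \tilde C = k_p$, the orbifold Chern numbers $\bar c_1^{\,2}$ and $\bar c_2$ of the pair become explicit polynomials in $m,n_2,n_3,n_4$ and the chosen weights.

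Finally I would invoke the orbifold BMY inequality $\bar c_1^{\,2} \leq 3\bar c_2$ and rearrange the result as an upper bound on a linear combination of $n_2, n_3, n_4$ by a quadratic polynomial in $m$. Choosing the weights $(\alpha,\beta_2,\beta_3,\beta_4)$ so as to simultaneously maximise the coefficients of $n_2,n_3,n_4$ on the left should reproduce $9n_2 + \tfrac{117}{4}n_3 + 60 n_4 \leq 5m^2-6m$, with the leading term $5m^2$ coming from $(\pi^*C)^2 = m^2$ scaled by the chosen $\alpha$ and the arithmetic-genus contribution.

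The main obstacle is the weight optimisation step: because $\alpha$ is shared across all singularity types while each $\beta_k$ is local, the three coefficients cannot be optimised independently, and a single quadruple of weights must be chosen that yields the sharp combined inequality. One must also verify that for the resulting weights the orbifold canonical divisor lies in the positivity range required by Langer's theorem, which is precisely where the hypothesis $m \geq 6$ enters — for smaller $m$ the orbifold $K$ fails to be big and the inequality cannot be invoked in this form.
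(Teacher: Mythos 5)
Your overall strategy --- Langer's orbifold Bogomolov--Miyaoka--Yau inequality applied to $\mathbb{P}^2$ with an orbifold structure along $C$ --- is exactly the route the paper takes. But your write-up stops short at precisely the step that produces the three coefficients: you flag the choice of weights as ``the main obstacle'' and leave it unresolved, so as written this is a plan rather than a proof, and the plan makes the final step look harder than it is.

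The gap closes once you notice that there is no four-parameter optimisation to perform. Since $C$ is reduced, the relevant orbifold structure is just the pair $(\mathbb{P}^{2}_{\mathbb{C}}, \alpha C)$ with a single weight $\alpha$, and Langer's theorem requires this pair to be log canonical at every singular point, i.e. $\alpha \le \mathrm{lct}_{p}(C)$ for all $p$. An ordinary quadruple point has $\mathrm{lct}=\tfrac{2}{4}=\tfrac12$, so $\alpha=\tfrac12$ is the extremal admissible weight and is simply fixed from the start; the weights on the exceptional divisors of your log resolution are not free parameters either, because Langer's local orbifold Euler number $e_{orb}\big(p;\mathbb{P}^{2}_{\mathbb{C}},\tfrac12 C\big)$ is already defined as the optimal local quantity and is tabulated in his paper: it equals $\tfrac14$ at a node, $\tfrac{1}{16}$ at an ordinary triple point, and $0$ at an ordinary quadruple point. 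With $\alpha=\tfrac12$ the inequality takes the packaged form
\[
\sum_{p\in \mathrm{Sing}(C)} 3\Big(\tfrac12(\mu_{p}-1)+1-e_{orb}\big(p;\mathbb{P}^{2}_{\mathbb{C}},\tfrac12 C\big)\Big)\ \le\ \tfrac54 m^{2}-\tfrac32 m,
\]
and substituting $\mu_{p}=(m_{p}-1)^{2}$ gives the left-hand side $\tfrac94 n_{2}+\tfrac{117}{16}n_{3}+15n_{4}$; multiplying by $4$ yields the stated bound. One small correction: the hypothesis $m\ge 6$ is used to make $K_{\mathbb{P}^{2}}+\tfrac12 C$ effective (its degree is $\tfrac{m}{2}-3\ge 0$), which is the positivity condition Langer's theorem actually needs, rather than bigness of the orbifold canonical class.
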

In the paper we work, most of the time, over the complex numbers and our symbolic computations are preformed using \verb}SINGULAR} \cite{Singular}.

\section{Preliminaries}
Here we want to present preparatory tools that will be used extensively in our classification.

Let $S := \mathbb{C}[x,y,z]$ denote the coordinate ring of $\mathbb{P}^{2}_{\mathbb{C}}$, and for a homogeneous polynomial $f \in S$ let $J_{f}$ denote the Jacobian ideal associated with $f$, that is, the ideal of the form $J_{f} = \langle \partial_{x}\, f, \partial_{y} \, f, \partial_{z} \, f \rangle$.
\begin{definition}
Let $p$ be an isolated singularity of a polynomial $f\in \mathbb{C}[x,y]$. Since we can change the local coordinates, assume that $p=(0,0)$.
Furthermore, the number 
$$\mu_{p}=\dim_\mathbb{C}\left(\mathbb{C}\{x,y\} /\bigg\langle \partial_{x}\, f ,\partial_{y}\, f \bigg\rangle\right)$$
is called the Milnor number of $f$ at $p$.

The number
$$\tau_{p}=\dim_\mathbb{C}\left(\mathbb{C}\{x,y\}/\bigg\langle f, \partial_{x}\, f ,\partial_{y} \, f \bigg\rangle \right)$$
is called the Tjurina number of $f$ at $p$.
\end{definition}

For a projective situation, with a point $p\in \mathbb{P}^{2}_{\mathbb{C}}$ and a homogeneous polynomial $f\in \mathbb{C}[x,y,z]$, we take local affine coordinates such that $p=(0,0,1)$ and then the dehomogenization of $f$.

Finally, the total Tjurina number of a given reduced curve $C \subset \mathbb{P}^{2}_{\mathbb{C}}$ is defined as
$$\tau(C) = \sum_{p \in {\rm Sing}(C)} \tau_{p}.$$ 

Recall that a singularity is called quasi-homogeneous if and only if there exists a holomorphic change of variables so that the defining equation becomes weighted homogeneous. If $C = \{f=0\}$ is a reduced plane curve with only quasi-homogeneous singularities,  then by \cite[Satz]{KS} one has
$$\tau(C) = \sum_{p \in {\rm Sing}(C)} \tau_{p} = \sum_{p \in {\rm Sing}(C)} \mu_{p} = \mu(C),$$
which means that the total Tjurina number is equal to the total Milnor number of $C$.

Next, we will need an important invariant that is defined in the language of the syzygies of $J_{f}$.
\begin{definition}
Consider the graded $S$-module of Jacobian syzygies of $f$, namely $$AR(f)=\{(a,b,c)\in S^3 : a\partial_{x} \, f + b \partial_{y} \, f + c \partial_{z} \, f = 0 \}.$$
The minimal degree of non-trivial Jacobian relations for $f$ is defined to be 
$${\rm mdr}(f):=\min\{r : AR(f)_r\neq (0)\}.$$ 
\end{definition}
\begin{definition}
A reduced curve $C \subset \mathbb{P}^{2}_{\mathbb{C}}$ is free if the Jacobian ideal $J_{f}$ is saturated with respect to $\mathfrak{m} = \langle x,y,z\rangle$.
\end{definition}
It is somewhat difficult to check the freeness property using the above definition. However, it turns out that we can check the freeness property using the language of the minimal degree of (non-trivial) Jacobian relations and the total Tjurina numbers according to a result due to du Plessis and Wall \cite{duP}.
\begin{theorem}
\label{dddp}
Let $C = \{f=0\}$ be a reduced curve in $\mathbb{P}^{2}_{\mathbb{C}}$. One has
\begin{equation}
\label{duPles}
(d-1)^{2} - r(d-r-1) = \tau(C)
\end{equation}
if and only if $C = \{f=0\}$ is a free curve, and then $r \leq (d-1)/2$.
\end{theorem}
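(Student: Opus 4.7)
The plan is to translate the statement into the language of the graded Jacobian syzygy module $AR(f)$ and exploit the Hilbert--Burch structure. The starting observation is that a reduced plane curve $C = \{f=0\}$ is free in the sense of saturation of $J_f$ if and only if $AR(f)$ is a free graded $S$-module of rank $2$. This equivalence is the bridge: once it is in hand, the numerical identity becomes a degree computation.

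First, assume $C$ is free and let $r = \mathrm{mdr}(f)$. Since $J_f$ is saturated, Cohen--Macaulay of codimension $2$, the Hilbert--Burch theorem supplies a minimal free resolution
\begin{equation*}
0 \longrightarrow S(-d_1-(d-1)) \oplus S(-d_2-(d-1)) \longrightarrow S(-(d-1))^3 \longrightarrow J_f \longrightarrow 0,
\end{equation*}
with $d_1 \leq d_2$. The Euler relation $x\,\partial_x f + y\,\partial_y f + z\,\partial_z f = d\cdot f$, once reduced modulo $f$, forces $d_1 + d_2 = d-1$, and by definition the smaller exponent is $d_1 = r$. Computing the Hilbert series of $S/J_f$ from this resolution and comparing with the standard expression
\begin{equation*}
\dim_{\mathbb{C}} (S/J_f)_k = \tau(C) \qquad \text{for } k \gg 0
\end{equation*}
(valid because for quasi-homogeneous singularities, or simply by the general identification of the stable value of the Hilbert function of the saturated Jacobian ideal with $\tau(C)$) yields, after a straightforward calculation,
\begin{equation*}
\tau(C) = (d-1)^2 - d_1 d_2 = (d-1)^2 - r(d-1-r),
\end{equation*}
which is precisely \eqref{duPles}. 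The bound $r \leq (d-1)/2$ drops out of $d_1 \leq d_2$.

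For the converse, I would establish the general upper bound $\tau(C) \leq (d-1)^2 - r(d-r-1)$ for any reduced plane curve, arguing that the Hilbert function of the saturation $\widehat{J_f}$ in degree $k$ dominates that of $J_f$ in degree $k$, with strict domination as soon as $J_f \ne \widehat{J_f}$. Once this is in place, equality in \eqref{duPles} forces $J_f = \widehat{J_f}$, i.e.\ freeness.

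The main technical obstacle is exactly this inequality in the non-free direction. One has to compare the two Hilbert functions with sufficient precision, and the cleanest route is a Koszul-type argument: $AR(f)$ sits inside the Koszul complex on $(\partial_x f, \partial_y f, \partial_z f)$, and a syzygy of minimal degree $r$ together with the Koszul relation generates a rank-$2$ submodule of $AR(f)$ whose cokernel, supported on the singular locus, controls $\tau(C)$ from below. Making this bookkeeping precise — and in particular identifying when the cokernel has length exactly $r(d-r-1)$ — is the heart of du Plessis and Wall's argument and the step I would allocate most care to.
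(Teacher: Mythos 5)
A point of reference first: the paper does not prove this statement at all --- it is quoted as a known result of du Plessis and Wall \cite{duP} and used as a black box --- so there is no in-paper argument to compare yours against; your proposal has to stand on its own.

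Your forward direction is essentially correct and standard: freeness gives the Hilbert--Burch resolution of $J_f$ with exponents $d_1\le d_2$ and $r=d_1$, and the Hilbert series computation yields $d_1+d_2=d-1$ and $\tau(C)=(d-1)^2-d_1d_2$. One imprecision: $d_1+d_2=d-1$ does not follow from ``the Euler relation reduced modulo $f$'' in any direct way. The clean derivation is that the numerator $1-3t^{d-1}+t^{d_1+d-1}+t^{d_2+d-1}$ of the Hilbert series of $S/J_f$ must vanish to order at least $2$ at $t=1$ because the Jacobian scheme is zero-dimensional (equivalently, via Saito's criterion applied to the full derivation module, $1+d_1+d_2=d$). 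This is easily repaired.

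The genuine gap is the converse, and you flag it yourself. The implication ``equality in \eqref{duPles} $\Rightarrow$ freeness'' rests on the inequality $\tau(C)\le (d-1)^2-r(d-r-1)$, strict whenever $C$ is not free, and this \emph{is} the theorem of du Plessis and Wall rather than a lemma on the way to it. It is not a formal consequence of ``the Hilbert function of the saturation $\widehat{J_f}$ dominates that of $J_f$, strictly when they differ'': strict domination in some unspecified degree gives no quantitative control on $\tau(C)$ unless you know in which degrees and by how much the two functions differ. The known arguments obtain this either by analyzing the graded module $N(f)=\widehat{J_f}/J_f$ together with its self-duality $N(f)_k\cong N(f)_{3(d-2)-k}^{\vee}$, or by du Plessis and Wall's original comparison of $f$ with a generic member of a pencil built from a minimal-degree syzygy; your Koszul-submodule sketch points in the right direction but stops exactly where the counting has to be done. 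Finally, the closing clause $r\le (d-1)/2$ is not addressed: for $2r\ge d$ one needs the refined bound $\tau(C)\le (d-1)^2-r(d-r-1)-\binom{2r+2-d}{2}$ to rule out equality, and nothing in your sketch produces that correction term. As written, the proposal is a correct road map for the easy half and a statement of intent for the hard half.
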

In order to perform our classification, we will need the following result \cite[Theorem 2.1]{DimcaSernesi}.
\begin{theorem}[Dimca-Sernesi]
\label{sern}
Let $C = \{f=0\}$ be a reduced curve of degree $d$ in $\mathbb{P}^{2}_{\mathbb{C}}$ having only quasi-homogeneous singularities. Then $${\rm mdr}(f) \geq \alpha_{C}\cdot d - 2,$$
where $\alpha_{C}$ denotes the Arnold exponent of $C$.
\end{theorem}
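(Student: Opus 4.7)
The plan is to translate the algebraic statement about Jacobian syzygies into a statement about logarithmic vector fields on $\mathbb{P}^2$, and then use the Hodge-theoretic interpretation of the Arnold exponent via multiplier ideals. First I would set up the dictionary: given a nontrivial triple $(a,b,c)\in AR(f)_r$, the derivation $\theta=a\partial_x+b\partial_y+c\partial_z$ annihilates $f$, so it is a logarithmic vector field along $C$. After quotienting out the Euler derivation, such triples correspond to global sections of an appropriate twist of the sheaf $T_{\mathbb{P}^2}\langle C\rangle$ of logarithmic tangent vector fields along $C$. The minimal degree of a nontrivial syzygy thus equals the smallest twist at which this sheaf acquires a section, and the goal is to rule out sections for all twists below $\alpha_C\cdot d-2$.

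Second, I would invoke the local structure at the singularities. For quasi-homogeneous singularities the Arnold exponent $\alpha_p$ coincides with the log canonical threshold $\mathrm{lct}_p(f)$, hence the global Arnold exponent $\alpha_C=\min_p \alpha_p$ equals the log canonical threshold of the pair $(\mathbb{P}^2,C)$. Consequently the multiplier ideal $\mathcal{J}(\mathbb{P}^2,\lambda C)$ is the full structure sheaf for every $\lambda<\alpha_C$, so by Nadel/Kawamata--Viehweg vanishing, any section of a suitable twisted logarithmic bundle that would violate the claimed bound is forced to vanish on the singular locus; a dimension count then forces the section itself to vanish.

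Third, to pin down the precise constant $\alpha_C\cdot d-2$, I would use the Dimca--Saito description of the pole order filtration on $H^*(\mathbb{P}^2\setminus C,\mathbb{C})$: the first nontrivial piece of the Hodge filtration on the complement appears at a level dictated by $\alpha_C$, and by duality the first nontrivial syzygy appears no earlier than this. Tracking the twists carefully, the canonical bundle $K_{\mathbb{P}^2}=\mathcal{O}(-3)$ combined with the shift coming from the Euler sequence (accounting for the $-2$) yields exactly $\mathrm{mdr}(f)\geq \alpha_C\cdot d-2$.

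The main obstacle is undoubtedly the third step: matching the numerical constant on the nose. Translating between the algebraic filtration on $AR(f)$ and the Hodge filtration on the cohomology of $\mathbb{P}^2\setminus C$ is where the subtlety lies, because one has to feed the local information (log canonical thresholds computed from quasi-homogeneous weights) through a global vanishing theorem without losing any units in the twist. All the heavy lifting is done by Dimca and Saito's framework relating pole order, Hodge filtration, and multiplier ideals; the remaining work is essentially bookkeeping with the Euler sequence on $\mathbb{P}^2$.
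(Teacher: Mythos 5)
First, a point of reference: the paper does not prove this statement at all --- it is imported verbatim as \cite[Theorem 2.1]{DimcaSernesi} --- so your proposal has to be measured against the Dimca--Sernesi proof rather than against anything in this text. Your first step and your general orientation are sound and do match their framework: elements of $AR(f)_r$ are exactly the non-Euler logarithmic derivations, they correspond to sections of a twist of $T_{\mathbb{P}^2}\langle C\rangle$ (equivalently, via $\omega(\rho)=a\,dy\wedge dz-b\,dx\wedge dz+c\,dx\wedge dy$, to degree $r+2$ two-forms annihilated by $df\wedge{}$), and the Arnold exponent enters precisely because for quasi-homogeneous singularities it coincides with the log canonical threshold, i.e.\ with the minimal spectral number.

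There are, however, two genuine gaps. (a) Your second step is not a valid mechanism: if $\mathcal{J}(\mathbb{P}^2,\lambda C)=\mathcal{O}_{\mathbb{P}^2}$ for all $\lambda<\alpha_C$, then Nadel vanishing in that range degenerates to Kodaira vanishing and carries no information about $C$; moreover, Nadel controls the \emph{higher} cohomology of adjoint line bundles twisted by the multiplier ideal, and provides no way to force a global section of a twist of the rank-two sheaf $T_{\mathbb{P}^2}\langle C\rangle$ to vanish along ${\rm Sing}(C)$. Nothing in that step, as written, kills a putative low-degree syzygy. (b) Your third step is where the entire theorem lives, and you explicitly defer it. The actual argument identifies the degree-$(r+2)$ form $\omega(\rho)$ with a nonzero class in a graded piece of the pole order filtration on the cohomology of $\mathbb{P}^2\setminus C$ (via the Koszul complex of the partials), compares the pole order filtration with the Hodge filtration, and then uses the local spectra of the quasi-homogeneous singularities --- not merely the triviality of a multiplier ideal below the threshold --- to show that the relevant graded piece vanishes in degrees below $\alpha_C\cdot d$, whence $r+2\geq \alpha_C\cdot d$. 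That input is a specific Dimca--Saito-type theorem, not bookkeeping with the Euler sequence; as it stands, your outline assumes the conclusion at exactly the point where it must be proved.
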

It is worth recalling that the Arnold exponent of a given reduced curve $C \subset \mathbb{P}^{2}_{\mathbb{C}}$ is defined as the minimum over all Arnold exponents of singular points $p$ in $C$. In modern language, the Arnold exponents of a singular point $p$ of a curve $C$ are nothing but the log canonical thresholds ${\rm lct}_{p}$ of the pair $(C,p)$. In the case of ordinary singularities, we have the following result \cite[Theorem 1.3]{Cheltsov}. 
\begin{theorem}
Let $C$ be a reduced curve in $\mathbb{C}^{2}$ which has degree $m$ and let $p \in {\rm Sing}(C)$. Then ${\rm lct}_{p}(C) \geq \frac{2}{m}$, and the equality holds if and only if $C$ is a union of $m$ lines passing through $p$.
\end{theorem}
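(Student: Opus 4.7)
Since the log canonical threshold is a local analytic invariant, I embed $C$ in $\mathbb{P}^2_{\mathbb{C}}$ as a reduced degree-$m$ curve and let $f$ denote a local equation of $C$ at $p$. The plan is to bound ${\rm lct}_p(C)$ from below by a log-resolution calculation. Choose a log resolution $\pi\colon Y\to \mathbb{P}^2_{\mathbb{C}}$ of $(\mathbb{P}^2_{\mathbb{C}}, C)$ dominating the blow-up of $p$, write
$$K_Y = \pi^* K_{\mathbb{P}^2_{\mathbb{C}}} + \sum_i a_i E_i, \qquad \pi^* C = \tilde C + \sum_i b_i E_i,$$
and use the standard log-resolution formula ${\rm lct}_p(C) = \min_i (1+a_i)/b_i$, the minimum ranging over exceptional divisors $E_i$ centered at $p$. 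My goal is to prove the inequality $2 b_i \leq m(1+a_i)$ for every such $E_i$, from which ${\rm lct}_p(C) \geq 2/m$ is immediate.

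For the first exceptional divisor $E_1$, $a_1 = 1$ and $b_1 = {\rm mult}_p(C)$, so the desired inequality reduces to ${\rm mult}_p(C) \leq m$, which is immediate from B\'ezout applied to a general line through $p$ not contained in $C$. For a higher $E_i$ produced by a chain of blow-ups at infinitely near points $p = p_0, p_1, \ldots, p_{i-1}$, I choose a line $L\subset \mathbb{P}^2_{\mathbb{C}}$ through $p$ whose successive strict transforms pass through the points $p_j$ as long as possible. B\'ezout gives $L\cdot C = m$, and Noether's multiplicity formula $i_p(L, C) = \sum_{q}{\rm mult}_q(\tilde L)\,{\rm mult}_q(\tilde C)$ (summed over infinitely near points of $p$) converts this global bound into a linear relation among the multiplicities ${\rm mult}_{p_j}(\tilde C_j)$. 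Matching it against the Enriques-diagram expressions for $b_i$ and $1+a_i$ (each a weighted sum over $p_0, \ldots, p_{i-1}$ with weights recording how $E_i$ descends through the blow-up chain) should yield $2 b_i \leq m(1+a_i)$.

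If equality ${\rm lct}_p(C) = 2/m$ holds, the minimum is attained on some $E_i$. On $E_1$, equality reads ${\rm mult}_p(C) = m$, which forces the local equation $f$ to be homogeneous of degree $m$; combined with reducedness this says $C$ is a union of $m$ distinct lines through $p$. For a higher $E_i$ the bookkeeping described above forces the corresponding tight B\'ezout statement, which in turn can only hold if $C$ already meets every line through $p$ with total local multiplicity $m$ at $p$, i.e.\ again ${\rm mult}_p(C) = m$, so the same conclusion applies. Conversely, for a union of $m$ distinct lines through $p$ the first blow-up is already a log resolution and the computation gives ${\rm lct}_p(C) = 2/m$ directly.

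The main obstacle I anticipate is the Enriques-diagram accounting in the second paragraph: making explicit how $b_i$ and $1+a_i$ decompose as weighted sums over the infinitely near points $p_j$, and then verifying that the single B\'ezout identity $L\cdot C = m$ — together with its decomposition via Noether's formula — suffices to simultaneously control both sides of the inequality for every $E_i$ in the resolution.
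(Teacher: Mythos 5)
First, note that the paper does not prove this statement at all: it is quoted verbatim from Cheltsov's article (cited as [Theorem 1.3] there), so there is no in-paper argument to measure yours against. Your proposal therefore has to stand on its own, and as written it does not yet: the entire content of the theorem is concentrated in the step you yourself flag as "the main obstacle," namely the inequality $2b_i\leq m(1+a_i)$ for exceptional divisors beyond the first blow-up. That inequality is \emph{equivalent} to the statement ${\rm lct}_p(C)\geq 2/m$, so deferring it means the proof is not done. Moreover, the specific plan you sketch is unlikely to close the gap as stated. A line can be made to pass through $p_0$ and the first infinitely near point $p_1$, but through no further point of the chain (its strict transform is then rigid), so Noether's formula yields only the single relation $m\geq m_0+m_1$ (and even this fails when the chosen line is a component of $C$ — a case you must split off by writing $C=L+C'$). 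For satellite points the weights in the expansions $b_i=\sum_j\rho_{ij}m_j$ and $1+a_i=1+\sum_j\rho_{ij}$ grow, and one then needs the proximity inequalities among the $m_j$ in an essential way; your outline never invokes them, and it is not clear that $m\geq m_0+m_1$ together with $m_0\geq m_1\geq\cdots$ alone forces $2\sum_j\rho_{ij}m_j\leq m(1+\sum_j\rho_{ij})$ for every Enriques diagram. The equality discussion for higher $E_i$ ("the bookkeeping forces ${\rm mult}_p(C)=m$") is likewise asserted rather than proved. What you do correctly is the $E_1$ case, the reduction ${\rm mult}_p(C)=m\Rightarrow C$ is a union of $m$ concurrent lines, and the converse computation.

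Two standard routes actually close this. One is the inductive inversion-of-adjunction argument: if $(\mathbb{P}^2,\tfrac{2}{m}C)$ is not log canonical at $p$, the multiplicity lemma gives ${\rm mult}_p(C)>m/2$; blowing up $p$ transfers non-log-canonicity to a point $q\in E_1$, adjunction along $E_1$ and the multiplicity lemma at $q$ give a second estimate, and B\'ezout applied to the line through $p$ in the tangent direction determined by $q$ (i.e.\ exactly your relation $m\geq m_0+m_1$, but fed into the argument one blow-up at a time rather than through global Enriques bookkeeping) produces the contradiction. The other route uses the classical formula for log canonical thresholds of plane curve germs: for an irreducible branch one has ${\rm lct}=\tfrac1a+\tfrac1b$ with $a={\rm mult}_p$ and $b$ the intersection number with the tangent line, and B\'ezout bounds both $a$ and $b$ by $m$; the reducible case is then handled by the known formulas for unions of branches. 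Either of these, or a genuinely completed version of your weighted-sum computation including the proximity inequalities and the $L\subset C$ case, would be needed before the proof can be considered complete.
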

\begin{remark}
\label{lct}
If $p=(0,0) \in \mathbb{C}^{2}$ is an ordinary singularity of multiplicity $r$ determined by $C = \{ f=0\}$, then ${\rm lct}_{p}(f) = \frac{2}{r}$.
\end{remark}
\section{Partial Classification}
Our classification procedure is based on the following general approach which uses combinatorial constraints derived from the combinatorics of free curves.

If $\mathcal{CL} \subset \mathbb{P}^{2}_{\mathbb{C}}$ is an arrangement of $d\geq 3$ lines and one conic having $n_{2}$ nodes, $n_{3}$ ordinary triple and $n_{4}$ ordinary quadruple points, then by B\'ezout's theorem we have
\begin{equation}
    2d + \binom{d}{2} = n_{2} + 3n_{3} + 6n_{4},
\end{equation}
and this is what we call the naive count. This simple combinatorial count is crucial because it allows us to determine which weak combinatorics are admissible for our conic-line arrangements. For instance, there does not exist an arrangement of two lines and one conics with $6$ ordinary double intersections. However, this combinatorial count is not strong enough for our purposes and we need to find additional restrictions. In this sense, we introduce the second constraint which concerns the total Tjurina number of $\mathcal{CL} = \{f=0\}$ with degree $d+2$ and $r:={\rm mdr}(f)$, namely
\begin{equation}
r^{2} - r(d+1) + (d+1)^2 = \tau(\mathcal{CL}) = n_{2} + 4n_{3} + 9n_{4},
\end{equation}
where the left-hand side follows from the fact that for an ordinary singularity $p \in {\rm Sing}(C)$ of multiplicity $m_{p} < 5$ one has $\tau_{p} = \mu_{p} = (m_{p}-1)^2$, see for instance \cite[Section 2]{KP}. The above equation decodes the fact that if we can find a conic-line arrangement with ordinary double, ordinary triple, and ordinary quadruple intersection points with suitable $r$, then our arrangement is free. Based on this discussion, our last missing element is to find numerical constraints on $r={\rm mdr}(f)$.
We have the following result.
\begin{proposition}
Let $\mathcal{CL} = \{f=0\}$ be an arrangement of $d\geq 3$ lines and one smooth conic that admits only ordinary singularities of multiplicity $<5$. Assume that $\mathcal{CL}$ is free, then
$${\rm mdr}(f) \in \bigg\{\bigg\lceil \frac{d-2}{2} \bigg\rceil, \bigg\lfloor \frac{d+1}{2} \bigg \rfloor \bigg\}.$$
\end{proposition}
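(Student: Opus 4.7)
The plan is to sandwich $r := \text{mdr}(f)$ between two integers that differ by exactly one, and then observe that those two integers must be the endpoints listed in the statement. The curve $\mathcal{CL}$ has degree $d+2$ and, by hypothesis, all of its singularities are ordinary of multiplicity $m \in \{2,3,4\}$; since $m < 5$ each such singularity is quasi-homogeneous, so the hypotheses of both Theorem~\ref{dddp} (du Plessis--Wall) and Theorem~\ref{sern} (Dimca--Sernesi) are available.

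For the upper bound I would invoke Theorem~\ref{dddp} applied to the free curve $\mathcal{CL}$ of degree $d+2$: freeness forces
\[
r \;\le\; \frac{(d+2)-1}{2} \;=\; \frac{d+1}{2},
\]
and since $r$ is a non-negative integer, $r \le \lfloor (d+1)/2 \rfloor$.

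For the lower bound I would apply Theorem~\ref{sern}, for which I first need to estimate the Arnold exponent $\alpha_{\mathcal{CL}}$. By Remark~\ref{lct} the local Arnold exponent (equivalently, the log canonical threshold) of an ordinary singularity of multiplicity $m$ equals $2/m$, so the possible local values are $1,\,2/3,\,1/2$ according to whether the singularity is a node, an ordinary triple point, or an ordinary quadruple point. Taking the minimum over all singular points gives $\alpha_{\mathcal{CL}} \ge 1/2$ independently of which types actually occur. Theorem~\ref{sern} then yields
\[
r \;\ge\; \alpha_{\mathcal{CL}}\cdot(d+2) - 2 \;\ge\; \frac{d+2}{2} - 2 \;=\; \frac{d-2}{2},
\]
whence $r \ge \lceil (d-2)/2 \rceil$.

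To conclude, a short parity check shows that for every integer $d$ one has
\[
\bigl\lfloor (d+1)/2 \bigr\rfloor \;-\; \bigl\lceil (d-2)/2 \bigr\rceil \;=\; 1,
\]
by verifying the two cases $d$ even and $d$ odd separately. Therefore the closed interval $[\lceil (d-2)/2 \rceil,\, \lfloor (d+1)/2 \rfloor]$ contains exactly two integers, namely its endpoints, and $r$ must coincide with one of them. I do not expect any real obstacle: the argument is a direct combination of two cited results with the uniform lower bound $\alpha_{\mathcal{CL}} \ge 1/2$, and the only place where care is needed is the parity bookkeeping of the floor and ceiling expressions.
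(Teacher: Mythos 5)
Your proof is correct and follows essentially the same route as the paper: the upper bound $r \le \lfloor (d+1)/2\rfloor$ from du Plessis--Wall applied to the free curve of degree $d+2$, and the lower bound $r \ge \lceil (d-2)/2\rceil$ from Dimca--Sernesi via the log canonical thresholds of ordinary points, followed by the observation that the two bounds differ by exactly one. Your use of $\alpha_{\mathcal{CL}} \ge 1/2$ (rather than the paper's asserted equality $\alpha_{\mathcal{CL}} = 1/2$, which strictly holds only when a quadruple point is present) is in fact the slightly more careful formulation, and your explicit parity check makes the final step cleaner than the paper's.
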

\begin{proof}
Since $\mathcal{CL}$ is free, then by Theorem \ref{dddp} one has ${\rm mdr}(f) \leq \lfloor \frac{d+1}{2} \rfloor$. Moreover, if $\mathcal{CL}$ admits only ordinary singularities with multiplicities $<5$, then $\alpha_{\mathcal{CL}} = \frac{1}{2}$, and this follows from Remark \ref{lct}. This gives us that
$${\rm mdr}(f) \geq \frac{d+2}{2} - 2 = \frac{d-2}{2},$$
and we finally obtain
$${\rm mdr}(f)\geq \bigg\lceil \frac{d-2}{2} \bigg\rceil.$$
\end{proof}
\noindent
For $d\geq 3$ and 
\begin{equation}
\label{mdrr}
r:=r(d) \in \bigg\{\bigg\lceil \frac{d-2}{2} \bigg\rceil, \bigg\lfloor \frac{d+1}{2} \bigg \rfloor \bigg\},
\end{equation}
we consider the following Diophantine system of equations:
\begin{equation}
\label{diophant}
\begin{cases}
    r^{2} - r(d+1) + (d+1)^2  = n_{2} + 4n_{3} + 9n_{4}, \\
    2d + \frac{d(d-1)}{2} = n_{2} + 3n_{3} + 6n_{4}.
\end{cases}
\end{equation}
Now our goal is to find all non-negative integer solutions $(n_{2},n_{3},n_{4})$ to \eqref{diophant}, depending on $d$ and $r=r(d)$, to get a complete weak combinatorial description of the expected free conic-line arrangements. The last step boils down to deciding on the existence/non-existence of a geometric realization of a given weak combinatorics, which is a completely non-trivial problem. It is worth emphasizing here that if we can find an arrangement $\mathcal{CL} = \{f=0\}$ that satisfies conditions \eqref{diophant} with $r={\rm mdr}(f)$ that satisfies \eqref{mdrr}, then $\mathcal{CL}$ is automatically free. In our classification we use a well-known (even folkloric) result which tells us that all loopless matroids of rank $3$ with up to $6$ elements are vector matroids which can be represented geometrically as line arrangements in $\mathbb{P}^{2}_{\mathbb{K}}$ with $\mathbb{K}$ being any infinite field.

Now we are read to deliver a proof of Theorem \ref{class}.
\begin{proof}
Our proof is a degree-wise classification.
\begin{enumerate}
    \item[($d=3$):] Solving system \eqref{diophant} with $r \in \{1,2\}$, we obtain exactly three admissible weak combinatorics, namely
    $$(n_{2},n_{3},n_{4}) \in \{(3,0,1), (0,3,0), (0,1,1)\}.$$
    We start with a geometric realization of the first possibility. 
    Consider the arrangement $\mathcal{CL}_{1}$ given by the following polynomial
    $$Q_{1}(x,y,z) = x(x^2 + y^2 - z^2)(y-x-z)(y+x-z).$$ Since $\tau(\mathcal{CL}_{1})=12$, because $(n_{2},n_{3},n_{4}) = (3,0,1)$, and ${\rm mdr}(Q_{1}) = 2$, hence both \eqref{mdrr} and \eqref{diophant} are satisfied.

    Let us now consider the following arrangement $\mathcal{CL}_{2}$ given by
    $$Q_{2} = y(x^2 + y^2 -16z^2 )(x+y-4z)(x-y+4z).$$ Since $(n_{2},n_{3},n_{4}) = (0,3,0)$, $\tau(\mathcal{CL}_{2})=12$ and ${\rm mdr}(Q_{2}) = 2$, hence both \eqref{mdrr} and \eqref{diophant} are satisfied.

    Observe that the weak combinatiorics $(n_{2},n_{3},n_{4})=(0,1,1)$ cannot be realized geometrically. If such an arrangement existed, we would be able to find two lines in the arrangement that intersect at two different points.
    \item[($d=4$):] 
    Solving system \eqref{diophant} with $r \in \{1,2\}$, we obtain exactly one possible weak combinatorics, namely $(n_{2},n_{3},n_{4}) = (2,2,1)$. 
    Consider the arrangement $\mathcal{CL}_{3}$ given by the following polynomial
    $$Q_{3}(x,y,z) = xy(x^2 + y^2 - z^2)(y-x-z)(y+x-z).$$ Observe that $\tau(\mathcal{CL}_{3})=19$, since we have $(n_{2},n_{3},n_{4}) = (2,2,1)$, and ${\rm mdr}(Q_{3}) = 2$, so both \eqref{mdrr} and \eqref{diophant} are satisfied.
    \item[($d=5$):] Solving system \eqref{diophant} with $r \in \{2,3\}$, we obtain exactly three possible weak combinatorics, namely 
    $$(n_{2},n_{3},n_{4}) \in \{(2,2,2),(5,1,2),(2,4,1)\}.$$ 
    We start with the first weak combinatorics.  Consider the arrangement $\mathcal{CL}_{4}$ given by the following polynomial
    $$Q_{4}(x,y,z) = y(x^2 + y^2 - z^2)(y-x-z)(y+x-z)(-y-x-z)(-y+x-z).$$ Since $\tau(\mathcal{CL}_{4})=28$, because we get the required intersections, and ${\rm mdr}(Q_{4}) = 2$, hence both \eqref{mdrr} and \eqref{diophant} are satisfied.
    
    For the second weak combinatorics, consider the arrangement $\mathcal{CL}_{5}$ given by the following polynomial
    $$Q_{5}(x,y,z) = y(x^2 + y^2 - z^2)(y-x-z)(y+x-z)(y+2x+2z)(y-2x+2z).$$ 
    Observe that $\tau(\mathcal{CL}_{5})=27$ and ${\rm mdr}(Q_{5}) = 3$, hence both \eqref{mdrr} and \eqref{diophant} are satisfied
    
    Now we will show that the combinatorics $(n_{2},n_{3},n_{4}) = (2,4,1)$ cannot be realized geometrically. Note that if such an arrangement existed, then we would have a subarrangement with the property that one quadruple point, two triple points, one double point are located on a given conic, and one additional double intersection point is located away from the conic, and these are all intersections between our curves. To get two more triple intersections, we have to draw a line through the two double points, and we end up with a contradiction because we have two lines that intersect at two different points. 
    \item[($d=6$):] Solving system \eqref{diophant} with $r \in \{2,3\}$, we obtain exactly five possible weak combinatorics, namely 
    $$(n_{2},n_{3},n_{4}) \in \{(3,0,4),(3,4,2),(6,1,3),(0,3,3),(0,7,1)\}.$$ 
        
    Let us start with the first weak combinatorics, namely $(n_{2},n_{3},n_{4}) = (3,0,4)$. Consider the arrangement $\mathcal{CL}_{6}$ given by
    $$Q_{6}(x,y,z) =xy(x^2 + y^2 - z^2)(y+x-z)(y-x-z)(y+x+z)(y-x+z).$$
    Observe that $\tau(\mathcal{CL}_{6})=39$, since $(n_{2},n_{3},n_{4}) = (3,0,4)$, and ${\rm mdr}(Q_{6}) = 2$, hence both \eqref{mdrr} and \eqref{diophant} are satisfied.

    Let us focus on the second weak combinatorics. Consider the arrangement $\mathcal{CL}_{7}$ given by
     $$Q_{7}(x,y,z) =x(x^2 + y^2 - z^2)(y-x)(y+x)\bigg(x-\frac{\sqrt{2}}{2}z\bigg)\bigg(x+\frac{\sqrt{2}}{2}z\bigg)\bigg(y+\frac{\sqrt{2}}{2}z\bigg).$$
    Observe that $\tau(\mathcal{CL}_{7})=37$, since $(n_{2},n_{3},n_{4}) = (3,4,2)$, and ${\rm mdr}(Q_{7}) = 3$, hence both \eqref{mdrr} and \eqref{diophant} are satisfied.
    
    Finally, let us consider the third weak combinatorics. Consider the arrangement $\mathcal{CL}_{8}$ given by
    $$Q_{8}(x,y,z) = (x-z)(x+z)(y-z)(y+z)(y+x)(y-x)(-2x^2 -2y^2 + 3z^2 +xy -xz +yz).$$
    Observe that $\tau(\mathcal{CL}_{8})=37$, since $(n_{2},n_{3}, n_{4}) = (6,1,3)$, and ${\rm mdr}(Q_{8}) = 3$, hence both \eqref{mdrr} and \eqref{diophant} are satisfied.

    To complete our classification for $d=6$, we need to show that the weak combinatorics $(n_{2},n_{3},n_{4})\in \{(0,3,3), (0,7,1)\}$ cannot be realized geometrically over the reals. In order to verify this claim we can start by looking at real line arrangements with only double, triple and quadruple points, and the following weak combinatorics are admissible:
    $$(n_{2}^{\mathcal{L}}, n_{3}^{\mathcal{L}}, n_{4}^{\mathcal{L}}) \in \{(15,0,0), (12,1,0), (9,2,0),(3,4,0), (9,0,1), (6,1,1)\},$$
    where $n_{i}^{\mathcal{L}}$ denotes the number of $i$-fold points of a given line arrangement $\mathcal{L} \subset \mathbb{P}^{2}_{\mathbb{R}}$.
    If we add a smooth conic to a given line arrangement, then by B\'ezout's Theorem the number of singularities of the resulting conic-line arrangement can either remain the same or increase. This implies that we can restrict our attention to the following weak combinatorics:
    $$(n_{2}^{\mathcal{L}}, n_{3}^{\mathcal{L}}, n_{4}^{\mathcal{L}}) \in \{(3,4,0), (6,1,1)\}.$$
    Furthermore, it follows from the above argument that we cannot construct a conic-line arrangement with the weak-combinatorics $(n_{2},n_{3},n_{4}) = (0,3,3)$ and we are left with the case $(n_{2},n_{3},n_{4}) = (0,7,1)$. 
    If we start with $\mathcal{L}$ such that  $(n_{2}^{\mathcal{L}}, n_{3}^{\mathcal{L}}, n_{4}^{\mathcal{L}}) = (3,4,0)$, then we have to add a conic that passes through one triple point and all double intersection points, but in that way we produce a conic-line arrangement with the weak combinatorics $(n_{2},n_{3},n_{4}) = (3,6,1)$. Now we look at $(n_{2}^{\mathcal{L}}, n_{3}^{\mathcal{L}}, n_{4}^{\mathcal{L}}) = (6,1,1)$. This arrangement is constructed in the following way. We take $3$ lines passing through a point $P\in \mathbb{P}^{2}_{\mathbb{R}}$, and $2$ lines passing through a point $Q \in \mathbb{P}^{2}_{\mathbb{R}}$ such that $P\neq Q$. Then we join these two pencils by the line passing through $P$ and $Q$. To construct our conic-line arrangement, we have to add a conic that would pass through $6$ double points of $\mathcal{L}$, but such a conic cannot exist by B\'ezout's Theorem since in the line arrangement we can find $3$ collinear double points.

     \item[($d=7$):] Solving system \eqref{diophant} with $r \in \{3,4\}$, we obtain exactly five possible weak combinatorics, namely 
    $$(n_{2},n_{3},n_{4}) \in \{(5,2,4),(5,4,3),(8,1,4),(2,5,3),(2,7,2)\}.$$ 
     For the first weak combinatorics, consider the arrangement $\mathcal{CL}_{9}$ given by
    $$Q_{9}(x,y,z) = x(x-z)(x+z)(y-z)(y+z)(y-x)(y+x)(x^2 + y^2 -2z^2 ).$$
    We can easily observe that $\tau(\mathcal{CL}_{9}) = 49$ and ${\rm mdr}(Q_{9})=3$, hence both \eqref{mdrr} and \eqref{diophant} are satisfied.

    Let us now pass to the second weak combinatorics and consider the arrangement $\mathcal{CL}_{10}$ given by
    \begin{multline*}
        Q_{10}(x,y,z) = x(-4x^2 + 12y^2 - 4yz - 5z^2 )\bigg(y+x+\frac{1}{2}z\bigg)\bigg(y-x+\frac{1}{2}z\bigg) \\ \bigg(y-\frac{3}{4}x\bigg)\bigg(y+\frac{3}{4}x\bigg)\bigg(y-2x+ \frac{5}{2}z\bigg)\bigg(y+2x+\frac{5}{2}z\bigg).
    \end{multline*} 
    One can easily check that $\tau(\mathcal{CL}_{10})=48$ and ${\rm mdr}(Q_{10})=4$, hence both \eqref{mdrr} and \eqref{diophant} are satisfied.
    
    Finally, let us pass to the third weak combinatorics. Consider the arrangement $\mathcal{CL}_{11}$ given by
   \begin{multline*}
     Q_{11}(x,y,z) = x(y+x+z)(y-x+z)(3x^2 + 5y^2 - 6yz - 11z^2 ) \\ \bigg(y - \frac{6}{10} x- \frac{22}{10}z \bigg)\bigg(y + \frac{6}{10}x - \frac{22}{10}z\bigg)\bigg(y - \frac{6}{10}x + \frac{2}{10}z \bigg)\bigg(y + \frac{6}{10} x + \frac{2}{10}z\bigg).
   \end{multline*}
   One can check that $\tau(\mathcal{CL}_{11})=48$ and ${\rm mdr}(Q_{10})=4$, so both \eqref{mdrr} and \eqref{diophant} are satisfied.

   \item[($d=8$):] Solving system \eqref{diophant} with $r \in \{3,4\}$, we obtain exactly five possible weak combinatorics, namely 
    $$(n_{2},n_{3},n_{4}) \in \{(5,5,4),(2,8,3),(8,2,5),(2,4,5),(5,1,6)\}.$$ 
    Let us focus on the first weak combinatorics. Consider the arrangement $\mathcal{CL}_{12}$ given by the equation
    $$Q_{12}(x,y,z) = xz(x+z)(x-z)(y+x-2z)(y+x)(y-x)(y+x+2z)(3x^{2}+y^{2}-4z^{2}).$$
    We can check that $(n_{2}, n_{3}, n_{4}) = (5,5,4)$, so we have $\tau(\mathcal{CL}_{12})=61$, and ${\rm mdr}(Q_{12})=4$, so both \eqref{mdrr} and \eqref{diophant} are satisfied.

    Let us go to the second weak combinatorics and we consider the arrangement $\mathcal{CL}_{13}$ given by
    \begin{multline*}
    Q_{13}(x,y,z) = xz(x+z)(x-z)(-3x^{2}+4y^{2}-z^{2})\bigg(y+\frac{1}{2}x+\frac{1}{2}z\bigg)\bigg(y+\frac{1}{2}x-\frac{1}{2}z\bigg) \\
    \bigg(y-\frac{1}{2}x-\frac{1}{2}z\bigg)\bigg(y-\frac{1}{2}x+\frac{1}{2}z\bigg).
    \end{multline*}
    We can check that $(n_{2}, n_{3}, n_{4}) = (2,8,3)$, so we have $\tau(\mathcal{CL}_{13})=61$, and ${\rm mdr}(Q_{13})=4$, so both \eqref{mdrr} and \eqref{diophant} are satisfied.
    
    Finally, we look at the third weak combinatorics. Consider the arrangement $\mathcal{CL}_{14}$ given by 
    $$Q_{14}(x,y,z) = xy(x+z)(x-z)(y-z)(y+z)(y-x)(y+x)(3x^{2}+y^{2}-4z^{2}).$$ We can check that $(n_{2}, n_{3}, n_{4}) = (8,2,5)$, so we have $\tau(\mathcal{CL}_{14})=61$, and ${\rm mdr}(Q_{14})=4$, so both \eqref{mdrr} and \eqref{diophant} are satisfied.
     \item[($d=9$):] Solving system \eqref{diophant} with $r \in \{4,5\}$, we obtain exactly nine possible weak combinatorics, namely 
     \begin{multline*}
        (n_{2},n_{3},n_{4}) \in \{(0,10,4),(3,7,5),(6,4,6),(9,1,7),(0,12,3),(3,9,4),\\ (6,6,5),(9,3,6),(12,0,7) \}.
     \end{multline*}
Here we are able to construct just one weak combinatorics, namely $(n_{2},n_{3},n_{4}) = (6,4,6)$. Consider the arrangement $\mathcal{CL}_{15}$ given by
\begin{multline*}
Q_{15}(x,y,z) = xy(x-z)(x+z)(y+z)(y-z)(y-x-z)(y-x+z)(y-x) \\ (-x^{2}+xy-y^{2}+z^{2}).
\end{multline*}
Since $\tau(\mathcal{CL}_{15})=76$ and ${\rm mdr}(\mathcal{CL}_{15})=4$, hence both \eqref{mdrr} and \eqref{diophant} are satisfied.
\item[($d=10$):] Solving system \eqref{diophant} with $r \in \{4,5\}$, we obtain exactly seven possible weak combinatorics, namely 
     \begin{multline*}
        (n_{2},n_{3},n_{4}) \in \{(2,11,5),(5,8,6),(8,5,7),(11,2,8),(2,7,7),(5,4,8),(8,1,9)\}.
        \end{multline*}
        We are going to show that the weak combinatorics $(n_{2},n_{3},n_{4})=(8,1,9)$ can be realized geometrically. Consider the arrangement $\mathcal{CL}_{16}$ given by 
\begin{multline*}
    Q_{16}(x,y,z) = xyz(x-z)(x+z)(y+z)(y-z)(y-x-z)(y-x+z)(y-x) \\ (-x^{2}+xy-y^{2}+z^{2}).
\end{multline*}
Since
$\tau(\mathcal{CL}_{16})=93$ and ${\rm mdr}(Q_{16})=4$, so both \eqref{mdrr} and \eqref{diophant} are satisfied.
\end{enumerate}
\end{proof}
\begin{remark}
Looking at a classification result of Neusel \cite{neusel}, one can notice that the weak combinatorics $(1,7;2,7,2)$ can be realized geometrically, presumably over the real numbers, or at least her \textit{Bilderbuch} may suggest this. Because of this ambiguity, we will briefly explain here why this is not the case by reproducing her picture by equations. The starting point is the arrangement of seven lines $\mathcal{L}$ given by
$$Q(x,y,z)=xy(x-z)(x+z)(y-z)(y+z)(y-x).$$
Consider the intersection points 
$$P_{1} = (-1:1:1),\quad  P_{2} = (-1:0:1), \quad P_{3}=(0:0:1), \quad P_{4}=(1:1:1), $$
$$P_{5}=(0:-1:1), \quad P_{6}=(1:-1:1).$$
Then, according to what we can see in the picture, the points $P_{1},P_{2},P_{3},P_{4},P_{5},P_{6}$ should be contained in a smooth conic. However, as a simple calculation shows, this is not the case.
\end{remark}
\begin{remark}
As we can see, in the cases $d=7,8,9,10$ some weak combinatorics remained untouched by us since we could not find their geometric realization over the real numbers. It is a separate and quite difficult problem to decide whether these weak combinatorics can be realized over the real or the complex numbers.
\end{remark}
\section{Graphical realizations of some arrangements}
In this section we want to present some geometric realizations of the arrangements constructed in the previous section to give a sense of their symmetry.

\textbf{Arrangement 1}.
\vspace{1cm}
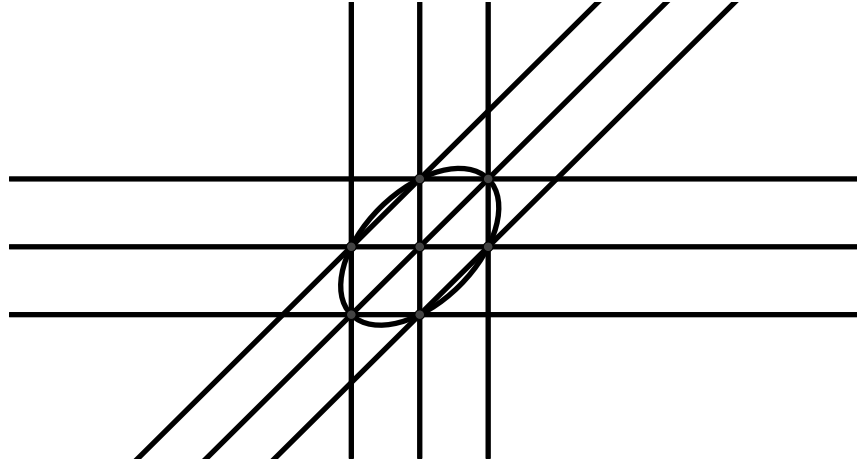
\begin{figure}[h]
\definecolor{uuuuuu}{rgb}{0.26666666666666666,0.26666666666666666,0.26666666666666666}
\centering
\begin{tikzpicture}[line cap=round,line join=round,>=triangle 45,x=1.0cm,y=1.0cm,scale=0.9]
\clip(-5.993665295370997,-3.112645222782914) rectangle (6.387229328409073,3.602513463852773);
\draw [line width=2.pt] (0.,-3.112645222782914) -- (0.,3.602513463852773);
\draw [line width=2.pt] (1.,-3.112645222782914) -- (1.,3.602513463852773);
\draw [line width=2.pt] (-1.,-3.112645222782914) -- (-1.,3.602513463852773);
\draw [line width=2.pt,domain=-5.993665295370997:6.387229328409073] plot(\x,{(-0.-0.*\x)/1.});
\draw [line width=2.pt,domain=-5.993665295370997:6.387229328409073] plot(\x,{(--1.-0.*\x)/1.});
\draw [line width=2.pt,domain=-5.993665295370997:6.387229328409073] plot(\x,{(-1.-0.*\x)/1.});
\draw [line width=2.pt,domain=-5.993665295370997:6.387229328409073] plot(\x,{(--1.--1.*\x)/1.});
\draw [line width=2.pt,domain=-5.993665295370997:6.387229328409073] plot(\x,{(-0.--1.*\x)/1.});
\draw [line width=2.pt,domain=-5.993665295370997:6.387229328409073] plot(\x,{(-1.--1.*\x)/1.});
\draw [rotate around={-135.:(0.,0.)},line width=2.pt] (0.,0.) ellipse (1.4142135623730951cm and 0.816496580927726cm);
\begin{scriptsize}
\draw [fill=uuuuuu] (-1.,0.) circle (2.0pt);
\draw [fill=uuuuuu] (0.,1.) circle (2.0pt);
\draw [fill=uuuuuu] (1.,1.) circle (2.0pt);
\draw [fill=uuuuuu] (1.,0.) circle (2.0pt);
\draw [fill=uuuuuu] (0.,0.) circle (2.0pt);
\draw [fill=uuuuuu] (-1.,-1.) circle (2.0pt);
\draw [fill=uuuuuu] (0.,-1.) circle (2.0pt);
\end{scriptsize}
\end{tikzpicture}
\caption{Arrangement of type $(9;6,4,6)$.}

\end{figure}

\textbf{Arrangement 2}.
\vspace{1cm}
\begin{figure}[ht]
\centering
\definecolor{wqwqwq}{rgb}{0.3764705882352941,0.3764705882352941,0.3764705882352941}
\centering
\begin{tikzpicture}[line cap=round,line join=round,>=triangle 45,x=1.0cm,y=1.0cm,scale=0.9]
\clip(-7.020381482257806,-2.5838330898658195) rectangle (6.843081857365727,4.12762778149371);
\draw [line width=2.pt] (0.,-2.5838330898658195) -- (0.,4.12762778149371);
\draw [line width=2.pt,domain=-7.020381482257806:6.843081857365727] plot(\x,{(-0.-0.*\x)/1.});
\draw [line width=2.pt] (-1.,-2.5838330898658195) -- (-1.,4.12762778149371);
\draw [line width=2.pt] (1.,-2.5838330898658195) -- (1.,4.12762778149371);
\draw [line width=2.pt,domain=-7.020381482257806:6.843081857365727] plot(\x,{(--1.-0.*\x)/1.});
\draw [line width=2.pt,domain=-7.020381482257806:6.843081857365727] plot(\x,{(-1.-0.*\x)/1.});
\draw [line width=2.pt,domain=-7.020381482257806:6.843081857365727] plot(\x,{(-0.--1.*\x)/1.});
\draw [line width=2.pt,domain=-7.020381482257806:6.843081857365727] plot(\x,{(-0.-1.*\x)/1.});
\draw [rotate around={90.:(0.,0.)},line width=2.pt] (0.,0.) ellipse (2.cm and 1.1547005383792515cm);
\begin{scriptsize}
\draw [fill=wqwqwq] (-1.,1.) circle (2.5pt);
\draw [fill=wqwqwq] (1.,1.) circle (2.5pt);
\draw [fill=wqwqwq] (0.,0.) circle (2.5pt);
\draw [fill=wqwqwq] (1.,-1.) circle (2.5pt);
\draw [fill=wqwqwq] (-1.,-1.) circle (2.5pt);
\end{scriptsize}
\end{tikzpicture}
\caption{Arrangement of type $(8;8,2,5)$.}
\end{figure}
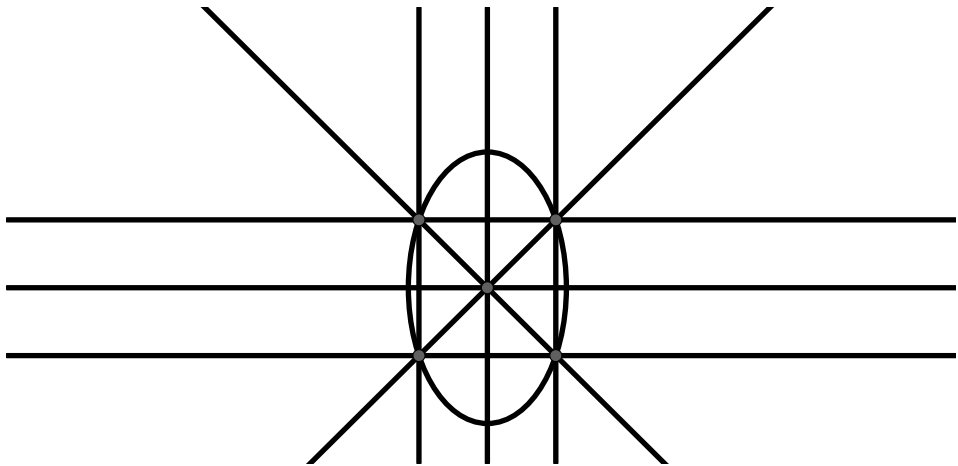
\newpage
\textbf{Arrangement 3.}
\vspace{1cm}
\begin{figure}[h]
\definecolor{wqwqwq}{rgb}{0.3764705882352941,0.3764705882352941,0.3764705882352941}
\centering
\begin{tikzpicture}[line cap=round,line join=round,>=triangle 45,x=1.0cm,y=1.0cm,scale=0.8]
\clip(-8.626789898750246,-1.5957639615131654) rectangle (8.312131702183478,7.148303929753834);
\draw [line width=2.pt] (0.,-1.5957639615131654) -- (0.,7.148303929753834);
\draw [rotate around={0.:(0.,0.6)},line width=2.pt] (0.,0.6) ellipse (2.065591117977289cm and 1.6cm);
\draw [line width=2.pt,domain=-8.626789898750246:8.312131702183478] plot(\x,{(--2.2--0.6*\x)/1.});
\draw [line width=2.pt,domain=-8.626789898750246:8.312131702183478] plot(\x,{(--2.2-0.6*\x)/1.});
\draw [line width=2.pt,domain=-8.626789898750246:8.312131702183478] plot(\x,{(-1.-1.*\x)/1.});
\draw [line width=2.pt,domain=-8.626789898750246:8.312131702183478] plot(\x,{(-1.--1.*\x)/1.});
\draw [line width=2.pt,domain=-8.626789898750246:8.312131702183478] plot(\x,{(-0.2--0.6*\x)/1.});
\draw [line width=2.pt,domain=-8.626789898750246:8.312131702183478] plot(\x,{(-0.2-0.6*\x)/1.});
\begin{scriptsize}
\draw [fill=wqwqwq] (2.,1.) circle (2.5pt);
\draw [fill=wqwqwq] (0.,-0.2) circle (2.5pt);
\draw [fill=wqwqwq] (-2.,1.) circle (2.5pt);
\draw [fill=wqwqwq] (0.,2.2) circle (2.5pt);
\draw [fill=wqwqwq] (0.,-1.) circle (2.5pt);
\end{scriptsize}
\end{tikzpicture}
\caption{Arrangement of type $(7;8,1,4)$.}
\end{figure}
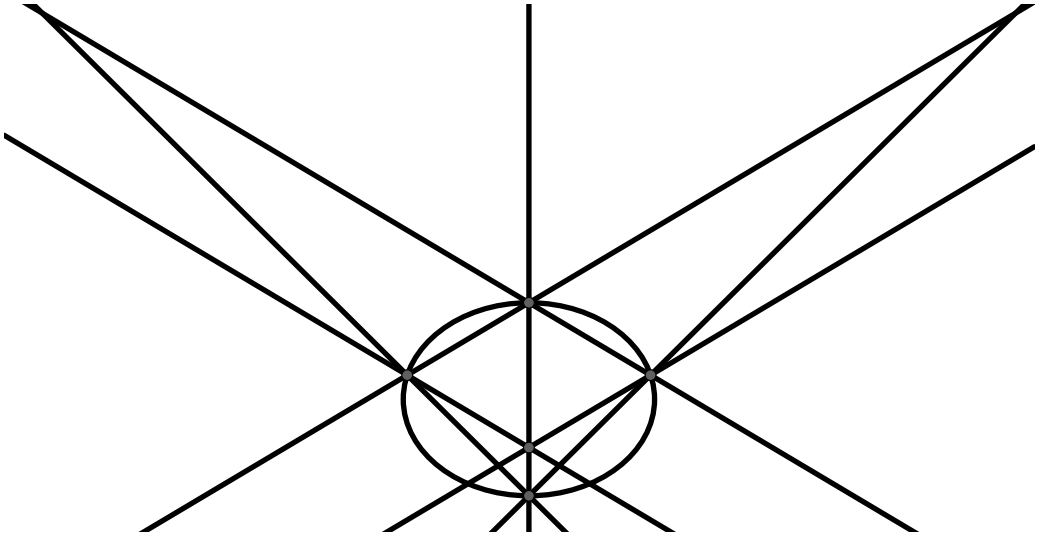

\textbf{Arrangement 4.}
\vspace{1cm}
\begin{figure}[ht]
\definecolor{wqwqwq}{rgb}{0.3764705882352941,0.3764705882352941,0.3764705882352941}
\centering
\begin{tikzpicture}[line cap=round,line join=round,>=triangle 45,x=1.0cm,y=1.0cm,scale=0.9]
\clip(-7.874185655657578,-2.778995909572207) rectangle (7.819098782371612,4.81830224520538);
\draw [line width=2.pt] (0.,-2.778995909572207) -- (0.,4.81830224520538);
\draw [samples=50,domain=-0.99:0.99,rotate around={90.:(0.,0.16666666666666666)},xshift=0.cm,yshift=0.16666666666666666cm,line width=2.pt] plot ({0.6666666666666666*(1+(\x)^2)/(1-(\x)^2)},{1.1547005383792515*2*(\x)/(1-(\x)^2)});
\draw [samples=50,domain=-0.99:0.99,rotate around={90.:(0.,0.16666666666666666)},xshift=0.cm,yshift=0.16666666666666666cm,line width=2.pt] plot ({0.6666666666666666*(-1-(\x)^2)/(1-(\x)^2)},{1.1547005383792515*(-2)*(\x)/(1-(\x)^2)});
\draw [line width=2.pt,domain=-7.874185655657578:7.819098782371612] plot(\x,{(-0.5-1.*\x)/1.});
\draw [line width=2.pt,domain=-7.874185655657578:7.819098782371612] plot(\x,{(-0.5--1.*\x)/1.});
\draw [line width=2.pt,domain=-7.874185655657578:7.819098782371612] plot(\x,{(-0.--0.75*\x)/1.});
\draw [line width=2.pt,domain=-7.874185655657578:7.819098782371612] plot(\x,{(-0.-0.75*\x)/1.});
\draw [line width=2.pt,domain=-7.874185655657578:7.819098782371612] plot(\x,{(-2.5--2.*\x)/1.});
\draw [line width=2.pt,domain=-7.874185655657578:7.819098782371612] plot(\x,{(-2.5-2.*\x)/1.});
\begin{scriptsize}
\draw [fill=wqwqwq] (-2.,1.5) circle (2.5pt);
\draw [fill=wqwqwq] (0.,-0.5) circle (2.5pt);
\draw [fill=wqwqwq] (0.,-2.5) circle (2.5pt);
\draw [fill=wqwqwq] (2.,1.5) circle (2.0pt);
\draw [fill=wqwqwq] (-0.9090909090909091,-0.6818181818181818) circle (2.5pt);
\draw [fill=wqwqwq] (0.9090909090909091,-0.6818181818181818) circle (2.5pt);
\draw [fill=wqwqwq] (0.,0.) circle (2.5pt);
\end{scriptsize}
\end{tikzpicture}
\caption{Arrangement of type $(7;5,4,3)$.}
\end{figure}
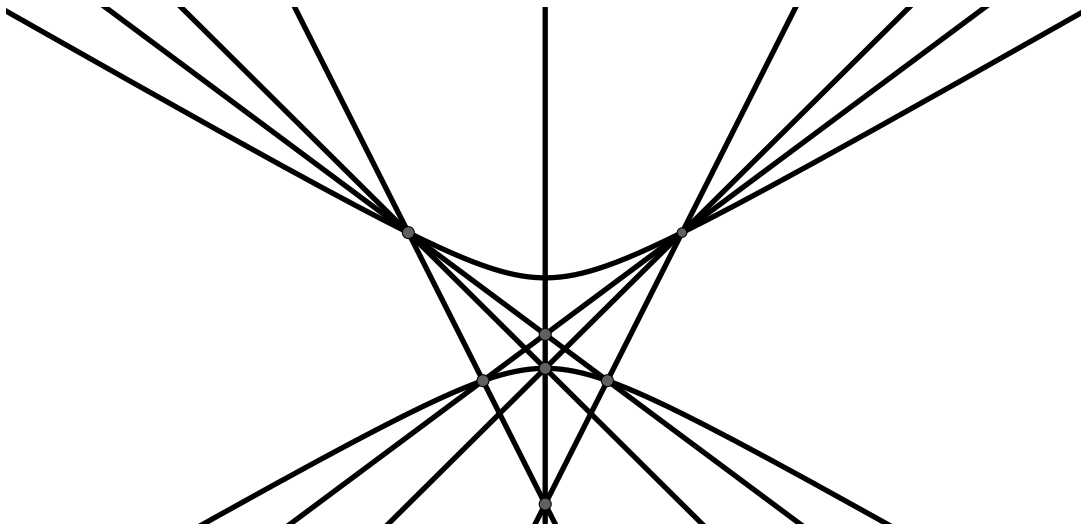
\newpage

\textbf{Arrangement 5.}
\vspace{1cm}
\begin{figure}[h]
\definecolor{yqyqyq}{rgb}{0.5019607843137255,0.5019607843137255,0.5019607843137255}
\definecolor{wqwqwq}{rgb}{0.3764705882352941,0.3764705882352941,0.3764705882352941}
\centering
\begin{tikzpicture}[line cap=round,line join=round,>=triangle 45,x=1.0cm,y=1.0cm]
\clip(-7.361199998992579,-3.2212539171466634) rectangle (6.856141156665902,3.6615232449100428);
\draw [line width=2.pt] (0.,0.) circle (1.cm);
\draw [line width=2.pt] (0.,0.) circle (1.0177943973136974cm);
\draw [line width=2.pt,domain=-7.361199998992579:6.856141156665902] plot(\x,{(--1.0359054352031525--1.0177943973136971*\x)/1.0177943973136974});
\draw [line width=2.pt,domain=-7.361199998992579:6.856141156665902] plot(\x,{(--1.0359054352031525-1.0177943973136974*\x)/1.0177943973136974});
\draw [line width=2.pt,domain=-7.361199998992579:6.856141156665902] plot(\x,{(--1.0359054352031525-1.0177943973136974*\x)/-1.0177943973136974});
\draw [line width=2.pt,domain=-7.361199998992579:6.856141156665902] plot(\x,{(--1.0359054352031525--1.0177943973136976*\x)/-1.0177943973136974});
\draw [line width=2.pt,domain=-7.361199998992579:6.856141156665902] plot(\x,{(-0.-0.*\x)/2.0355887946273947});
\draw [line width=2.pt] (0.,-3.2212539171466634) -- (0.,3.6615232449100428);
\begin{scriptsize}
\draw [fill=wqwqwq] (-1.0177943973136974,0.) circle (2.5pt);
\draw [fill=wqwqwq] (0.,1.0177943973136974) circle (2.5pt);
\draw [fill=yqyqyq] (1.0177943973136974,0.) circle (2.5pt);
\draw [fill=wqwqwq] (0.,-1.0177943973136974) circle (2.5pt);
\end{scriptsize}
\end{tikzpicture}
\caption{Arrangement of type $(6;3,0,4)$.}
\end{figure}
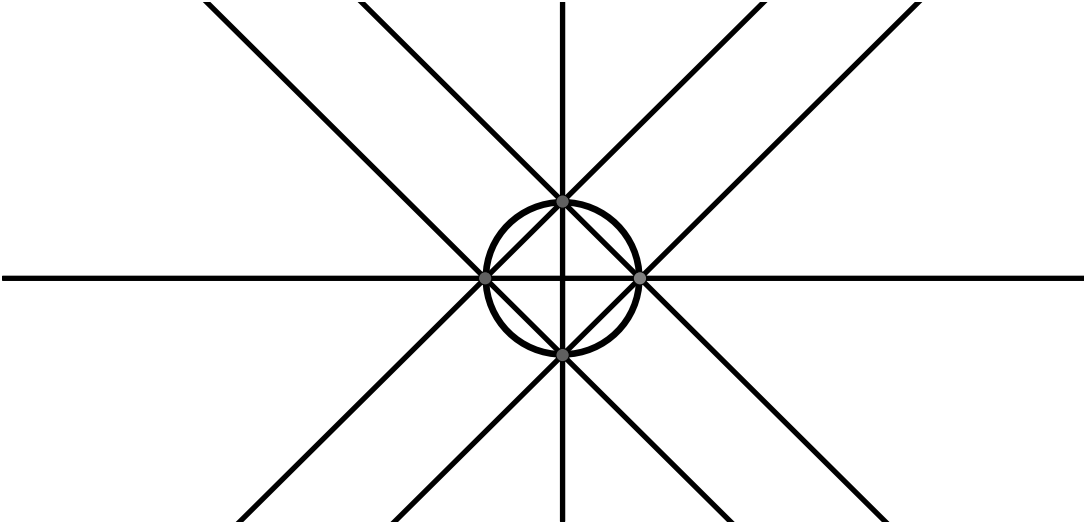

\section{Combinatorial constraints on plane curves with ordinary quasi-homogeneous singularities}
In this section we present our proof of Theorem \ref{Hir}.
\begin{proof}
We present a detailed outline of our proof since it is similar to \cite[Theorem B]{Pokora}. By the assumption $m= {\rm deg} \, C \geq 6$, so we can work with the pair $(\mathbb{P}^{2}_{\mathbb{C}}, \frac{1}{2}C)$ that is effective, and thus we can use an orbifold Bogomolov-Miyaoka-Yau inequality \cite[Section 11]{Langer}, namely
$$(\star) \, : \quad \sum_{p \in {\rm Sing}(C)} 3\bigg( \frac{1}{2}(\mu_{p}-1)+1-e_{orb}\bigg(p;\mathbb{P}^{2}_{\mathbb{C}}, \frac{1}{2} C\bigg)\bigg)\leq  \frac{5}{4}m^{2}- \frac{3}{2}m,$$
where $e_{orb}(p;\mathbb{P}^{2}_{\mathbb{C}},\alpha C)$ is the local orbifold Euler number of a given singularity $p \in {\rm Sing}(C)$ \cite[Definition 3.1]{Langer}, and $\mu_{p}$ denotes the Milnor number of $p$. Let us recall these local orbifold Euler numbers for our selection of singularities. Using \cite[Theorem 8.7, Theorem 9.4.2]{Langer}, we can find the following:
\begin{itemize}
\item If $p \in {\rm Sing}(C)$ is a node, then $e_{orb}\bigg(p;\mathbb{P}^{2}_{\mathbb{C}}, \frac{1}{2} C\bigg) = \frac{1}{4}$.
\item If $q \in {\rm Sing}(C)$ is an ordinary triple point, then $e_{orb}\bigg(p;\mathbb{P}^{2}_{\mathbb{C}}, \frac{1}{2} C\bigg) = \frac{1}{16}$.
\item If $r \in {\rm Sing}(C)$ is an ordinary quadruple point, then $e_{orb}\bigg(p;\mathbb{P}^{2}_{\mathbb{C}}, \frac{1}{2} C\bigg) = 0$ which follows from the fact that the log canonical threshold for ordinary quadruple points is equal to $\frac{1}{2}$, see Remark \ref{lct}.
\end{itemize}
Observe that the left-hand side of $(\star)$ has the following form:
$$3n_{2}\cdot(1 - 1/4) + 3n_{3}\cdot(3/2 + 1 - 1/16) + 3n_{4}\cdot 5 = \frac{9}{4}n_{2} + \frac{117}{16}n_{3} + 15n_{4},$$
hence
$$\frac{9}{4}n_{2} + \frac{117}{16}n_{3} + 15n_{4} \leq \frac{5}{4}m^{2} - \frac{3}{2}m,$$
so after multiplying by $4$ we finally get
$$9n_{2} + \frac{117}{4}n_{3} + 60n_{4} \leq 5m^{2}-6m,$$
which completes the proof.
\end{proof}

\section*{Conflict of Interests}
We declare that there is no conflict of interest regarding the publication of this paper.
\section*{Data Availability Statement}
We do not analyse or generate any datasets, because this work proceeds within a theoretical and mathematical approach. 
\section*{Acknowledgement}
I want to thank Tomasz Pe\l ka for discussions about conic-line arrangements. I would like to thank the anonymous referees for all their comments and suggestions that improved the presentation of this paper.

\vskip 0.5 cm

\bigskip
Piotr Pokora,
Department of Mathematics,
University of the National Education Commission Krakow,
Podchor\c a\.zych 2,
PL-30-084 Krak\'ow, Poland. \\
\nopagebreak
\textit{E-mail address:} \texttt{piotrpkr@gmail.com, piotr.pokora@uken.krakow.pl}
\bigskip
\end{document}